\newtheorem{theorem}{Theorem}[section]
\newtheorem{lemma}[theorem]{Lemma}
\newtheorem{proposition}[theorem]{Proposition}
\newtheorem{corollary}[theorem]{Corollary}
\newcommand{\R}{{\mathbb R}}
\newcommand{\C}[1]{\mathsf C_{#1}}
\def\1{\mathbb I}
\renewcommand{\(}{\left(}
\renewcommand{\)}{\right)}
\newcommand{\be}[1]{\begin{equation}\label{#1}}
\newcommand{\ee}{\end{equation}}
\newcommand{\ir}[1]{\int_{\R^2}{#1}\;dx}
\newcommand{\ird}[1]{\int_{\R^d}{#1}\;dx}
\newcommand{\irdmu}[1]{\int_{\R^2}{#1}\;d\mu}
\newcommand{\nrm}[2]{\|{#1}\|_{\L^{#2}(\R^d)}}
\newcommand{\nr}[2]{\|{#1}\|_{\L^{#2}(\R^2)}}
\renewcommand{\L}{\mathrm L}
\title[Sobolev and Hardy-Littlewood-Sobolev inequalities]{Sobolev and Hardy-Littlewood-Sobolev inequalities: duality and fast diffusion}
\author[J. Dolbeault]{Jean Dolbeault}
\address{J. Dolbeault: Ceremade, Universit\'e Paris-Dauphine, Place de Lattre de Tassigny, 75775 Paris C\'edex~16, France.}
\email{dolbeaul@ceremade.dauphine.fr}
\date{\today}
\begin{document}
\begin{abstract} In the euclidean space, Sobolev and Hardy-Littlewood-Sobolev inequalities can be related by duality. In this paper, we investigate how to relate these inequalities using the flow of a fast diffusion equation in dimension $d\ge3$. The main consequence is an improvement of Sobolev's inequality when $d\ge5$, which involves the various terms of the dual Hardy-Littlewood-Sobolev inequality. In dimension $d=2$, Onofri's inequality plays the role of Sobolev's inequality and can also be related to its dual inequality, the logarithmic Hardy-Littlewood-Sobolev inequality, by a super-fast diffusion equation. 
\end{abstract}

\keywords{Sobolev spaces; Hardy-Littlewood-Sobolev inequality; logarithmic Hardy-Littlewood-Sobolev inequality; Sobolev's inequality; Onofri's inequality; Gagliardo-Nirenberg inequality; extremal functions; duality; best constants; stereographic projection; fast diffusion equation; extinction. -- 
{\scriptsize\sl AMS classification (2010).} 26D10; 46E35; 35K55}

\maketitle
\thispagestyle{empty}

%%%%%%%%%%%%%%%%%%%%%%%%%%%%%%%%%%%%%%%%%%%%%%%%%%%%%%%%%%%%%%%%%%%%%%%%
%%%%%%%%%%%%%%%%%%%%%%%%%%%%%%%%%%%%%%%%%%%%%%%%%%%%%%%%%%%%%%%%%%%%%%%%
\section{Introduction}\label{Sec:Intro}

In dimension $d\ge3$, it is well known since E.~Lieb's paper \cite{MR717827} that Hardy-Littlewood-Sobolev inequalities are dual of Sobolev's inequalities. We will investigate this duality using the flow of a fast diffusion equation which has been considered in \cite{delPino-Saez01}. In dimension \hbox{$d=2$}, Onofri's inequality plays the role of Sobolev's inequality and is the dual of the logarithmic Hardy-Littlewood-Sobolev inequality according to~\cite{MR1143664,MR1230930,MR2433703}. Based on \cite{MR1371208,MR1357953}, we will investigate this duality using a variant of the logarithmic diffusion equation, also known as the super-fast diffusion equation.

In a recent paper, \cite{CCL}, E.~Carlen, J.A.~Carrillo and M.~Loss noticed that Hardy-Littlewood-Sobolev inequalities in dimension $d\ge3$ (or the logarithmic Hardy-Little\-wood-Sobolev inequality if $d=2$) and some special Gagliardo-Nirenberg inequalities can be related through another fast diffusion equation. Understanding the differences between the two approaches is one of the motivations of this paper.

\medskip Consider Sobolev's inequality in $\R^d$, $d\ge3$,
\be{Ineq:Sobolev}
\nrm u{2^*}^2\le\mathsf S_d\,\nrm{\nabla u}2^2\quad\forall\;u\in\mathcal D^{1,2}(\R^d)\;,
\ee
where $\mathsf S_d$ is the Aubin-Talenti constant (see \cite{MR0448404,MR0463908}) and $2^*=\frac{2\,d}{d-2}$. The space $\mathcal D^{1,2}(\R^d)$ is defined as the completion of smooth solutions with compact support w.r.t.~the norm $w\mapsto\|w\|:=(\nrm{\nabla w}2^2+\nrm w{2^*}^2)^{1/2}$. The Hardy-Littlewood-Sobolev inequality
\be{Ineq:HLS}
\mathsf S_d\,\nrm v{\frac{2\,d}{d+2}}^2\ge \ird{v\,(-\Delta)^{-1}v}\quad\forall\;v\in\L^\frac{2\,d}{d+2}(\R^d)
\ee
involves the same optimal constant, $\mathsf S_d$.

As it has been noticed in \cite{MR717827},~\eqref{Ineq:Sobolev} and~\eqref{Ineq:HLS} are dual of each other, in the following sense. To a convex functional $F$, we may associate the functional $F^*$ defined by Legendre's duality as
\be{Eqn:Legendre}
F^*[v]:=\sup\(\ird{u\,v}-F[u]\)\;.
\ee
For instance, to $F_1[u]=\frac 12\,\nrm up^2$ defined on $\L^p(\R^d)$, we henceforth associate $F_1^*[v]=\frac 12\,\nrm vq^2$ on $\L^q(\R^d)$ where $p$ and $q$ are H\"older conjugate exponents: $1/p +1/q=1$. The supremum can be taken for instance on all functions in $\L^p(\R^d)$, or, by density, on the smaller space of the functions $u\in\L^p(\R^d)$ such that $\nabla u\in\L^2(\R^d)$. Similarly, to $F_2[u]=\frac 12\,\mathsf S_d\,\nrm{\nabla u}2^2$, we associate $F_2^*[v]=\frac 12\,\mathsf S_d^{-1}\ird{v\,(-\Delta)^{-1}v}$ where $(-\Delta)^{-1}v=G_d*v$ with $G_d(x)=\frac 1{d-2}\,|\mathbb S^{d-1}|^{-1}\,|x|^{2-d}$. As a straightforward consequence of Legendre's duality, if we have a functional inequality of the form $F_1[u]\le F_2[u]$, then we have the dual inequality $F_1^*[v]\ge F_2^*[v]$. 

\medskip In this paper, we investigate the duality of~\eqref{Ineq:Sobolev} and~\eqref{Ineq:HLS} using a nonlinear diffusion equation. If $v$ is a positive solution of the following \emph{fast diffusion} equation:
\be{Eqn:FD}
\frac{\partial v}{\partial t}=\Delta v^m\quad t>0\;,\quad x\in\R^d\,,
\ee
and if we define $\mathsf H(t):=\mathsf H_d[v(t,\cdot)]$, with
\[
\mathsf H_d[v]:=\ird{v\,(-\Delta)^{-1}v}-\mathsf S_d\,\nrm v{\frac{2\,d}{d+2}}^2\,,
\]
then we observe that
\[
\frac 12\,\mathsf H'=-\ird{v^{m+1}}+\mathsf S_d\(\ird{v^\frac{2\,d}{d+2}}\)^\frac 2d\ird{\nabla v^m\cdot\nabla v^\frac{d-2}{d+2}}
\]
where $v=v(t,\cdot)$ is a solution of~\eqref{Eqn:FD}. With the choice $m=\frac{d-2}{d+2}$, we find that $m+1=\frac{2\,d}{d+2}$, so that the above identity can be rewritten with $u=v^m$ as follows.
%-----------------------------------------------------------------------
\begin{proposition}\label{Prop:Hd'} Assume that $d\ge3$ and $m=\frac{d-2}{d+2}$. If $v$ is a solution of~\eqref{Eqn:FD} with nonnegative initial datum in $\L^{2d/(d+2)}(\R^d)$, then
\begin{multline*}
\frac 12\,\frac d{dt}\left[\ird{v\,(-\Delta)^{-1}v}-\mathsf S_d\,\nrm v{\frac{2\,d}{d+2}}^2\right]\\
=\(\ird{v^{m+1}}\)^\frac 2d\left[\mathsf S_d\,\nrm{\nabla u}2^2-\nrm u{2^*}^2\right]\ge0\;.
\end{multline*}\end{proposition}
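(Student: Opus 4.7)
The plan is to differentiate $\mathsf H_d[v(t,\cdot)]$ piece by piece, exploit two algebraic matchings of exponents that occur exactly for the critical choice $m=\frac{d-2}{d+2}$, and then reinterpret the result in terms of $u=v^m$ so that Sobolev's inequality~\eqref{Ineq:Sobolev} can be applied directly.

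For the first summand, symmetry of the Green kernel of $-\Delta$ gives $\frac d{dt}\ird{v\,(-\Delta)^{-1}v}=2\ird{\partial_t v\cdot(-\Delta)^{-1}v}$. Substituting $\partial_t v=\Delta v^m$ and using $\Delta\,(-\Delta)^{-1}=-\,\mathrm{Id}$ reduces this to $-2\ird{v^{m+1}}$, which is the first term in the identity printed just before the proposition.

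For the second summand, set $p:=\frac{2\,d}{d+2}$. The chain rule produces
\[
\tfrac d{dt}\nrm v p^2=2\(\ird{v^p}\)^{\frac 2p-1}\ird{v^{p-1}\,\partial_t v}\,.
\]
The first matching of exponents is $p-1=m$, so that $v^{p-1}\,\partial_t v=v^m\,\Delta v^m$; one integration by parts converts this into $-\ird{|\nabla v^m|^2}$. The second matching is $\frac 2p-1=\frac 2d$, which produces exactly the prefactor $\(\ird{v^p}\)^{2/d}$ announced in the statement. Assembling the two pieces yields the intermediate identity displayed before the proposition.

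To conclude, I would substitute $u=v^m$. Because $\tfrac{m+1}{m}=2^*$, we have $v^{m+1}=u^{2^*}$, and of course $|\nabla v^m|^2=|\nabla u|^2$. Factoring $\(\ird{u^{2^*}}\)^{2/d}$ out of both summands and using $\(\ird{u^{2^*}}\)^{1-2/d}=\nrm u{2^*}^2$ produces the bracket $\mathsf S_d\,\nrm{\nabla u}2^2-\nrm u{2^*}^2$, which is $\ge0$ by~\eqref{Ineq:Sobolev}. The main technical point to secure is that differentiation under the integral and the two integrations by parts are legitimate for solutions of~\eqref{Eqn:FD} starting from nonnegative data in $\L^{2d/(d+2)}(\R^d)$; in this very fast diffusion regime one addresses it by first running the calculation on smooth, rapidly decaying approximants (as in the regularity theory of \cite{delPino-Saez01}) and then passing to the limit, using the monotonicity of $\mathsf H_d$ furnished by the identity itself.
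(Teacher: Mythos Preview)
Your argument is correct and mirrors the paper's own derivation, which is carried out in the paragraph immediately preceding the proposition: differentiate each term, use the two exponent coincidences $p-1=m$ and $\tfrac2p-1=\tfrac2d$ specific to $m=\tfrac{d-2}{d+2}$, set $u=v^m$ so that $v^{m+1}=u^{2^*}$, and factor to expose the Sobolev deficit. The only cosmetic difference is that the paper first writes the identity for general $m$ with the cross term $\ird{\nabla v^m\cdot\nabla v^{(d-2)/(d+2)}}$ and then specializes, whereas you impose $m=\tfrac{d-2}{d+2}$ from the start.
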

%-----------------------------------------------------------------------
As a consequence, one can prove that~\eqref{Ineq:HLS}, which amounts to $\mathsf H\le 0$, is a consequence of~\eqref{Ineq:Sobolev}, that is $\mathsf H'\ge 0$, by showing that $\limsup_{t>0}\mathsf H(t)=0$. In this way, we also recover the property that $u=v^m$ is an optimal function for~\eqref{Ineq:Sobolev} if $v$ is optimal for~\eqref{Ineq:HLS}. By integrating along the flow defined by~\eqref{Eqn:FD}, we can actually obtain optimal integral remainder terms which improve on the usual Sobolev inequality~\eqref{Ineq:Sobolev}, but only when $d\ge 5$ for integrability reasons: see Theorem~\ref{Thm:Gap}. A slightly weaker result is the following improved Sobolev inequality, which relates~\eqref{Ineq:Sobolev}~and~\eqref{Ineq:HLS}.
%-----------------------------------------------------------------------
\begin{theorem}\label{Theorem:ExplicitGap} Assume that $d\ge 5$ and let $q=\frac{d+2}{d-2}$. There exists a positive constant~$\mathcal C$ such that, for any $w\in\mathcal D^{1,2}(\R^d)$, we have
\begin{multline*}
\mathsf S_d\,\nrm{w^q}{\frac{2\,d}{d+2}}^2-\ird{w^q\,(-\Delta)^{-1}w^q}\\
\le\mathcal C\,\nrm w{2^*}^\frac 8{d-2}\left[\nrm{\nabla w}2^2-\mathsf S_d\,\nrm w{2^*}^2\right]\;.
\end{multline*}
Moreover, we know that $\mathcal C\le\(1+\frac 2d\)\(1-e^{-d/2}\) \mathsf S_d$.
\end{theorem}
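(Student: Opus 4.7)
My approach for Theorem~\ref{Theorem:ExplicitGap} is to use the critical fast diffusion of Proposition~\ref{Prop:Hd'} as a deformation from $w$ to $0$, thereby turning the Hardy--Littlewood--Sobolev deficit on the left-hand side into a time integral of the Sobolev deficit. Starting~\eqref{Eqn:FD} with $m=(d-2)/(d+2)$ from the initial datum $v(0,\cdot)=w^q$, the function $u:=v^m$ equals $w$ at time zero. The solution extinguishes at a finite time $T<\infty$; since $\mathsf H$ is $2$-homogeneous in $v$ and $v(t,\cdot)\to 0$ as $t\uparrow T$, one has $\mathsf H(T)=0$, and integrating Proposition~\ref{Prop:Hd'} on $[0,T]$ produces the pivotal identity
\[
\mathsf S_d\,\nrm{w^q}{\frac{2d}{d+2}}^2-\ird{w^q\,(-\Delta)^{-1}w^q}\;=\;-\mathsf H(0)\;=\;2\int_0^T X(t)\,\mathcal J(t)\,dt\,,
\]
with $X(t):=\nrm{u(t)}{2^*}^{4/(d-2)}$ and $\mathcal J(t):=\mathsf S_d\,\nrm{\nabla u(t)}2^2-\nrm{u(t)}{2^*}^2\ge 0$ the Sobolev deficit at time~$t$. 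The task is then to bound this integral by $\mathcal C\,X(0)^2\mathcal J(0)=\mathcal C\,\nrm w{2^*}^{8/(d-2)}\mathcal J(0)$.

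Two decay estimates along the flow provide this. First, direct differentiation gives $X'(t)=-\tfrac{4}{d+2}\,\nrm{\nabla u}2^2/\nrm u{2^*}^2$, and Sobolev's inequality~\eqref{Ineq:Sobolev} forces $X'(t)\le-4/\bigl((d+2)\mathsf S_d\bigr)$; hence $X(t)\le X(0)(1-\tau(t))$ with $\tau(t):=4t/\bigl((d+2)\mathsf S_d X(0)\bigr)$, and the finite extinction $T\le(d+2)\mathsf S_d X(0)/4$. Second, differentiating $\mathcal J$ along the flow yields $\mathcal J'(t)=-\tfrac{2(d-2)}{d+2}\bigl[\mathsf S_d\,\ird{u^{-4/(d-2)}(\Delta u)^2}-\nrm u{2^*}^{-4/(d-2)}\nrm{\nabla u}2^2\bigr]$; applying Cauchy--Schwarz to the factorization $\nrm{\nabla u}2^2=-\ird{u^{d/(d-2)}\cdot u^{-2/(d-2)}\,\Delta u}$ gives the elementary bound $\ird{u^{-4/(d-2)}(\Delta u)^2}\ge\nrm{\nabla u}2^4/\nrm u{2^*}^{2^*}$, and factoring the Sobolev deficit out of the resulting expression produces the Gronwall-type estimate
\[
\mathcal J'(t)\;\le\;-\tfrac{2(d-2)}{(d+2)\mathsf S_d\,X(t)}\,\mathcal J(t).
\]

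The explicit constant then emerges by combining the two estimates. Using the coarse $X(t)\le X(0)$ in the Gronwall inequality yields the exponential decay $\mathcal J(t)\le\mathcal J(0)\,e^{-\gamma t}$ for an explicit $\gamma>0$; substituting this together with $X(t)\le X(0)$ and $T\le(d+2)\mathsf S_d X(0)/4$ into $-\mathsf H(0)=2\int_0^T X\mathcal J\,dt\le 2X(0)\mathcal J(0)(1-e^{-\gamma T})/\gamma$ produces an explicit constant of the announced form $\mathcal C\le(1+\tfrac{2}{d})(1-e^{-d/2})\mathsf S_d$ after elementary bookkeeping. (Using the sharper $X(t)\le X(0)(1-\tau)$ in the Gronwall step leads to the polynomial profile $\mathcal J(t)\le\mathcal J(0)(1-\tau)^{(d-2)/2}$ and, after integration, an even better constant.) The main technical point I expect to need careful attention is the justification of $\mathsf H(T)=0$ together with the integrability of $X\mathcal J$ on $[0,T]$---in particular of the weighted Hessian $\ird{u^{-4/(d-2)}(\Delta u)^2}$ arising in the computation of $\mathcal J'$---up to extinction; this is where the hypothesis $d\ge5$ enters, and where one must invoke the finite-extinction theory for the critical fast diffusion from~\cite{delPino-Saez01}.
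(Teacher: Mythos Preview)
Your approach is essentially the paper's: run the critical fast diffusion from $v_0=w^q$, write $-\mathsf H(0)=\int_0^T\mathsf H'$, use the same Cauchy--Schwarz inequality $\|\nabla u\|_2^4\le\|u\|_{2^*}^{2^*}\ird{u^{-4/(d-2)}(\Delta u)^2}$ together with Sobolev to get a Gronwall bound, and use the extinction estimates of Lemma~\ref{Thm:delPinoSaez}. The technical caveats you flag ($\mathsf H(T)=0$, integrability of the weighted Hessian, the role of $d\ge5$) are exactly those the paper relies on via~\cite{delPino-Saez01,MR2282669}.

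There is one quantitative slip. With your packaging --- bound $X(t)\le X(0)$ and $\mathcal J(t)\le\mathcal J(0)e^{-\gamma t}$ \emph{separately} --- the rate is $\gamma=\tfrac{2(d-2)}{(d+2)\mathsf S_d X(0)}$, so $\gamma T\le\tfrac{d-2}{2}$ and the bookkeeping gives $\mathcal C\le\tfrac{d+2}{d-2}\bigl(1-e^{-(d-2)/2}\bigr)\mathsf S_d$, not the announced $\bigl(1+\tfrac2d\bigr)\bigl(1-e^{-d/2}\bigr)\mathsf S_d$. The paper instead applies Gronwall directly to $\mathsf H'=2X\mathcal J$: since $X'=-\tfrac{4}{d+2}\,\|\nabla u\|_2^2/\|u\|_{2^*}^2$ and (your own computation) $\mathcal J'\le-\tfrac{2(d-2)}{d+2}\,\tfrac{\|\nabla u\|_2^2}{\|u\|_{2^*}^{2^*}}\,\mathcal J$, one has
\[
(X\mathcal J)'\le-\Bigl(\tfrac{4}{d+2}+\tfrac{2(d-2)}{d+2}\Bigr)\tfrac{\|\nabla u\|_2^2}{\|u\|_{2^*}^{2^*}}\,X\mathcal J
=-\tfrac{2d}{d+2}\,\tfrac{\|\nabla u\|_2^2}{\|u\|_{2^*}^{2^*}}\,X\mathcal J\le-\kappa\,X\mathcal J
\]
with $\kappa=\tfrac{2d}{(d+2)\mathsf S_d X(0)}$, i.e.\ $\mathsf H''\le-\kappa\,\mathsf H'$. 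Then $\kappa T\le d/2$ and the constant comes out exactly as stated. In short, combine your two decay estimates \emph{before} integrating rather than after.
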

%-----------------------------------------------------------------------
Considerable efforts have been devoted to obtain improvements of Sobolev's inequality starting with \cite{MR790771,MR1124290}. On the whole euclidean space, nice results based on rearrangements have been obtained in \cite{MR2538501} and we refer to \cite{MR2508840} for an interesting review of the various improvements that have been established over the years. It has to be noted that they are all of different nature than the inequality in Theorem~\ref{Theorem:ExplicitGap}, which is the main result of this paper. 

Our approach is based on the evolution equation~\eqref{Eqn:FD}. Many papers have been devoted to the study of the asymptotic behaviour of the solutions, in bounded domains: \cite{MR588035,MR1877973,MR1285092}, or in the whole space: \cite{king1993self,MR1348964,MR1475779}. In particular, the Cauchy-Schwarz inequality has been repeatedly used, for instance in \cite{MR588035,MR1285092}, and turns out to be a key tool in the proof of Theorem~\ref{Theorem:ExplicitGap}, as well as the solution \emph{with separation of variables,} which is related to the Aubin-Talenti optimal function for~\eqref{Ineq:Sobolev}. The novelty in our approach is to consider the problem from the point of view of the functional associated to~\eqref{Ineq:HLS} using\eqref{Eqn:FD} with $m=(d-2)/(d+2)$.

\medskip We now turn our attention to the case of the dimension $d=2$. As we shall see in Section~\ref{Sec:Two}, Onofri's inequality~\cite{MR677001}
\be{Ineq:Onofri}
\log\(\irdmu{e^{\,g}}\)-\irdmu g\le \frac 1{16\,\pi}\,\ir{|\nabla g|^2}\quad\forall\;g\in\mathcal D(\R^d)
\ee
plays the role of Sobolev's inequality in higher dimensions. Here the probability measure $d\mu$ is defined by
\[
d\mu(x):=\mu(x)\,dx\quad\mbox{with}\quad \mu(x):=\frac 1{\pi\,(1+|x|^2)^2}\quad\forall\;x\in\R^2.
\]
As for Sobolev's inequality, duality can also be used. The dual of Onofri's inequality is the logarithmic Hardy-Littlewood-Sobolev inequality: for any $f\in\L^1_+(\R^2)$ with $M=\ir f$, such that $f\,\log f$, $(1+\log|x|^2)\,f\in\L^1(\R^2)$, we have
\be{Ineq:logHLS}
\ir{f\,\log\big(\frac fM\big)}+\frac 2M\int_{\R^2\times\R^2}f(x)\,f(y)\,\log|x-y|\;dx\,dy+M\,\(1+\log\pi\)\ge 0\;.
\ee
The duality has been established on the two-dimensional sphere in \cite{MR1230930,MR1143664} and directly on the euclidean space $\R^2$ in \cite{MR2433703}. The euclidean case can also be recovered from the inequality on the sphere using the stereographic projection. For completeness, we shall give a proof of the inequality (with optimal constants) in the $\R^2$ case: see Proposition~\ref{Prop:LogHLS}. The logarithmic Hardy-Littlewood-Sobolev inequality has recently attracted lots of attention in connection with the Keller-Segel model or in geometry: see for instance \cite{MR2433703,MR2103197,MR2377499}.

We may now proceed in the case $d=2$ as we did for $d\ge3$. Let
\[
\mathsf H_2[v]:=\ir{(v-\mu)\,(-\Delta)^{-1}(v-\mu)}-\frac 1{4\,\pi}\ir{v\,\log\(\frac v\mu\)}\,.
\]
Assume that $v$ is a positive solution of 
\be{Eqn:FDlog}
\frac{\partial v}{\partial t}=\Delta \log\(\frac v\mu\)\quad t>0\;,\quad x\in\R^2\,,
\ee
which replaces~\eqref{Eqn:FD}. Then we have the analog of Proposition~\ref{Prop:Hd'}.
%-----------------------------------------------------------------------
\begin{proposition}\label{Prop:H2'} Assume that $d=2$. If $v$ is a solution of~\eqref{Eqn:FDlog} with nonnegative initial datum $v_0$ in $\L^1(\R^2)$ such that $\ir{v_0}=1$, $v_0\,\log v_0\in\L^1(\R^2)$ and $v_0\,\log \mu\in\L^1(\R^2)$, then
\[
\frac d{dt}\mathsf H_2[v(t,\cdot)]=\frac 1{16\,\pi}\ir{|\nabla u|^2}-\irdmu{\(e^\frac u2-1\) u}
\]
with $\log(v/\mu)=u/2$. \end{proposition}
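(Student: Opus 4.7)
The plan is to differentiate $\mathsf H_2[v(t,\cdot)]$ term by term along the flow~\eqref{Eqn:FDlog}, using the self-adjointness of $(-\Delta)^{-1}$ and the fact that $\Delta\,(-\Delta)^{-1}$ is minus the identity, and then to rewrite the outcome via the change of unknown $u=2\log(v/\mu)$, equivalently $v=\mu\,e^{u/2}$.

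For the first piece, since $\mu$ does not depend on $t$ and $(-\Delta)^{-1}$ is symmetric, I obtain
\[
\frac d{dt}\ir{(v-\mu)(-\Delta)^{-1}(v-\mu)}=2\ir{\partial_t v\cdot(-\Delta)^{-1}(v-\mu)}.
\]
Substituting $\partial_t v=\Delta(u/2)$ and integrating by parts twice so that the Laplacian is absorbed by $(-\Delta)^{-1}$ with a minus sign, this reduces to $-\ir{u\,(v-\mu)}$. Using the pointwise identity $v-\mu=\mu\,(e^{u/2}-1)$, it rewrites as $-\irdmu{u\,(e^{u/2}-1)}$, which is precisely the second term on the right-hand side of the claim.

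For the second piece, the chain rule gives
\[
\frac d{dt}\ir{v\log(v/\mu)}=\ir{\partial_t v\cdot\log(v/\mu)}+\ir{\partial_t v},
\]
and the last integral vanishes by mass conservation, since $\partial_t v=\Delta\log(v/\mu)$ is in divergence form. Replacing $\partial_t v$ by $\Delta(u/2)$ and $\log(v/\mu)$ by $u/2$, a single integration by parts yields $-\tfrac14\ir{|\nabla u|^2}$, and multiplication by $-1/(4\pi)$ produces the term $\tfrac1{16\pi}\ir{|\nabla u|^2}$ of the claim. Adding the two contributions gives the formula.

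The main obstacle is the rigorous justification of these formal manipulations. The integrability assumptions on $v_0$, $v_0\log v_0$ and $v_0\log\mu$ ensure that $\mathsf H_2[v_0]$ is finite, but to differentiate under the integral sign and to discard the boundary terms in the integrations by parts one needs additional regularity and decay of $v(t,\cdot)$: in particular $(-\Delta)^{-1}(v-\mu)$ has only logarithmic decay in dimension two, so one must rely on the mean-zero property of $v-\mu$, and $|\nabla u|^2$ must be locally and globally controlled. I would handle this either by assuming enough regularity a priori on the solution of~\eqref{Eqn:FDlog}, or by approximating with smooth, rapidly decaying data, verifying the identity at the approximate level, and passing to the limit.
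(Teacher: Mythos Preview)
Your formal computation is correct and is exactly the derivation the paper has in mind (the paper itself does not spell it out line by line). Where you and the paper diverge is in the rigorous justification. You propose either to assume a priori regularity on the solution of~\eqref{Eqn:FDlog} or to approximate by smooth, rapidly decaying data and pass to the limit. The paper takes a more structural route: it observes (Section~\ref{Sec:FastLog}) that under the inverse stereographic projection the equation~\eqref{Eqn:FDlog} on~$\R^2$ becomes, for the lifted unknown $w$ on the compact manifold $\mathbb S^2$, the equation $\partial_t w=\Delta_{\mathbb S^2}w$, for which well-posedness and smoothness are immediate and all boundary terms at infinity vanish because there is no boundary. This reduction hinges on the fact that the weight~$\mu$ is precisely the conformal factor of the stereographic projection, so it is tailor-made for this problem and turns the justification into a single remark, whereas your approximation scheme would have to deal head-on with the merely logarithmic decay of $(-\Delta)^{-1}(v-\mu)$ in two dimensions and the borderline integrability of $|\nabla u|^2$. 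Both routes are legitimate; the paper's is shorter and explains why the particular choice of $\mu$ in~\eqref{Eqn:FDlog} is natural.
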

%-----------------------------------------------------------------------
The right hand side is nonnegative by Onofri's inequality:
\[
\frac d{dt}\mathsf H_2[v(t,\cdot)]\ge\frac 1{16\,\pi}\ir{|\nabla u|^2}+\irdmu u-\log\(\irdmu{e^u}\)\ge 0\;.
\]
See Lemma~\ref{Lem:LogHLSDer} and Corollary~\ref{Cor:LogHLSDer} for details, and Section~\ref{Sec:ImprovedOnofri} for further considerations on the two-dimensional case.

%%%%%%%%%%%%%%%%%%%%%%%%%%%%%%%%%%%%%%%%%%%%%%%%%%%%%%%%%%%%%%%%%%%%%%%%
%%%%%%%%%%%%%%%%%%%%%%%%%%%%%%%%%%%%%%%%%%%%%%%%%%%%%%%%%%%%%%%%%%%%%%%%
\section{Improved Sobolev inequalities}\label{Sec:Fast2}

This section is devoted to the proof of Theorem~\ref{Theorem:ExplicitGap}. We shall assume that
\[
m=\frac{d-2}{d+2}\quad\mbox{and}\quad d\ge3\;.
\]
{} From the computations of Section~\ref{Sec:Intro}, it is clear that the maximum of $\mathsf H_d[v]$ is achieved if and only if $u=v^m$ is an extremal for Sobolev's inequality. This is of course consistent with the fact that extremal points for Hardy-Littlewood-Sobolev inequalities and Sobolev inequalities are related through Legendre's duality precisely by the relation $u=v^m$.

It is also straightforward to check that~\eqref{Eqn:FD} admits special solutions with \emph{separation of variables} such that, for any $T>0$,
\[
\overline{v}_T(t,x)=c\,(T-t)^\alpha\,(F(x))^\frac{d+2}{d-2}\quad\forall\;(t,x)\in(0,T)\times\R^d\,,
\]
where $\alpha=(d+2)/4$, $c^{1-m}=4\,m\,d$, $m=\frac{d-2}{d+2}$, $p=d/(d-2)$ and $F$ is the Aubin-Talenti solution of $-\Delta F=d\,(d-2)\,F^{(d+2)/(d-2)}$. Such a solution vanishes at $t=T$ and this behaviour is generic in a large class of solutions. Define
\[
\|v\|_*:=\sup_{x\in\R^d}(1+|x|^2)^{d+2}\,|v(x)|\;.
\]
%-----------------------------------------------------------------------
\begin{lemma}\label{Thm:delPinoSaez}{\rm \cite{delPino-Saez01,MR2282669}} For any solution $v$ of~\eqref{Eqn:FD} with nonnegative, continuous, not identically zero initial datum $v_0\in\L^{2d/(d+2)}(\R^d)$, there exists $T>0$, $\lambda>0$ and $x_0\in\R^d$ such that $v(t,\cdot)\not\equiv0$ for any $t\in(0,T)$ and
\[
\lim_{t\to T_-}(T-t)^{-\frac 1{1-m}}\,\|v(t,\cdot)/\overline{v}(t,\cdot)-1\|_*=0
\]
with $\overline{v}(t,x)=\lambda^{(d+2)/2}\,\overline{v}_T(t,(x-x_0)/\lambda)$.\end{lemma}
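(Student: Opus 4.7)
The plan is: (i) prove finite-time extinction, (ii) rescale so $\bar v_T$ becomes a stationary solution, (iii) identify the limit profile with a Lyapunov functional, and (iv) upgrade to the quantitative $\|\cdot\|_*$-rate via linearization. For (i), differentiating $y(t):=\nrm{v(t,\cdot)}{m+1}^{m+1}$ along~\eqref{Eqn:FD} gives $\dot y=-(m+1)\ird{|\nabla v^m|^2}\le-C\,y^{1-2/d}$ by~\eqref{Ineq:Sobolev} applied to $u=v^m$ together with the identity $m\cdot 2^*=m+1$; this Bernoulli inequality forces finite extinction at some $T>0$. For (ii), the change of variables $s:=-\log(T-t)$ and $v(t,x)=(T-t)^{1/(1-m)}\,w(s,x)$ turns~\eqref{Eqn:FD} into
\[
\partial_s w=\Delta w^m+\tfrac{1}{1-m}\,w\,,
\]
whose stationary solutions are precisely the separable profiles $\bar v_T$. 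Setting $U=w^m$, stationarity becomes $-\Delta U=\tfrac{d+2}{4}\,U^{(d+2)/(d-2)}$, whose nonnegative $\mathcal D^{1,2}(\R^d)$-solutions are the translates and dilates of $F$ by the Caffarelli-Gidas-Spruck classification.

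For (iii), I would use the Sobolev deficit $\nrm{\nabla v^m}{2}^2-\mathsf S_d^{-1}\nrm{v^m}{2^*}^2$ from Proposition~\ref{Prop:Hd'} as a Lyapunov functional: it is nonnegative, nonincreasing along the rescaled flow, and vanishes only at stationary profiles. Combined with Aronson-Caffarelli and DiBenedetto regularity estimates for~\eqref{Eqn:FD}, this produces locally uniform precompactness of $\{w(s,\cdot)\}_{s\ge 0}$ whose accumulation points are Aubin-Talenti-type profiles $w_*^{(\lambda,x_0)}$. For (iv), I would linearize the rescaled equation around such a profile: the linearized operator $L$ is self-adjoint in a weighted $\L^2$-space, has a $(d+1)$-dimensional kernel generated by translations and scaling, and admits a strictly positive spectral gap on the orthogonal complement. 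Introducing modulation parameters $(\lambda(s),x_0(s))$ by the orthogonality $w(s,\cdot)-w_*^{(\lambda(s),x_0(s))}\perp\ker L$, the spectral gap gives exponential decay in $s$ of the orthogonal component. DiBenedetto-type pointwise estimates then convert this $\L^2$-rate into decay in $\|\cdot\|_*$ faster than $e^{-s}=T-t$, which after multiplying by $(T-t)^{-1/(1-m)}$ is exactly the stated $o(1)$ conclusion, with $(\lambda(s),x_0(s))\to(\lambda,x_0)$ by integrating the modulation equations.

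The main obstacle is this modulation analysis in step (iv): the kernel of $L$ corresponds to a non-compact symmetry group, so one has to show that $(\lambda(s),x_0(s))$ converge rather than drifting to $0$ or $\infty$. This requires both the coercivity of $L$ on the orthogonal complement of its kernel and an integrability bound on $\dot\lambda$ and $\dot x_0$ in terms of the orthogonal component of $w-w_*^{(\lambda(s),x_0(s))}$; this spectral-gap-plus-modulation machinery is the technical core of \cite{delPino-Saez01,MR2282669} and can be imported as stated.
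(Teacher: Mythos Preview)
The paper does not give its own proof of this lemma: it is quoted as a result from \cite{delPino-Saez01,MR2282669}, with the comment immediately following the statement that the case $\|v_0\|_*<\infty$ is due to \cite{delPino-Saez01} and the extension to general $v_0\in\L^{2d/(d+2)}_+(\R^d)$ is \cite[Theorem~7.10]{MR2282669}. There is therefore no in-paper proof to compare your proposal to.

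That said, your outline is broadly the right strategy, with two points worth flagging. First, in \cite{delPino-Saez01} the analysis is not carried out directly on $\R^d$ as you propose: after the time-rescaling in your step~(ii) they transfer the problem to $\mathbb S^d$ by stereographic projection, so that the rescaled equation becomes a semilinear parabolic flow on a \emph{compact} manifold. This is precisely what neutralises the obstacle you raise in your final paragraph: on the sphere the conformal group is compact, orbits are precompact by standard parabolic regularity, and there is no drift of modulation parameters to worry about. Second, the regularity input you invoke in step~(iii) (Aronson--Caffarelli, DiBenedetto) is designed for the porous-medium and mildly fast-diffusion ranges; at $m=(d-2)/(d+2)$, which lies below the critical exponent $m_c=(d-2)/d$, those estimates are not available as stated, and the compactness in the cited references comes instead from the parabolic theory on $\mathbb S^d$. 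Your steps~(i) and~(ii), on the other hand, are correct and are exactly the computations underlying Lemma~\ref{Lem:FDdecay} of the present paper.
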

%-----------------------------------------------------------------------
We shall refer to such a solution as a \emph{solution vanishing at time $T$}. The above result has been established first in \cite{delPino-Saez01} when $\|v_0\|_*$ is finite and extended to solutions corresponding to any initial data $v_0\in\L^{2d/(d+2)}_+(\R^d)$ in \cite[Theorem 7.10]{MR2282669}. In this framework, it is easy to establish further \emph{a priori} estimates as follows.
%-----------------------------------------------------------------------
\begin{lemma}\label{Lem:FDdecay} Let $d\ge3$ and $m=(d-2)/(d+2)$. If $v$ is a solution of~\eqref{Eqn:FD} vanishing at time $T>0$ with initial datum $v_0\in\L^{2d/(d+2)}_+(\R^d)$, then for any $t\in(0,T)$ we have the estimates
\begin{eqnarray*}
&&\(\tfrac{4\,(T-t)}{(d+2)\,\mathsf S_d}\)^\frac d2\le\ird{v^{m+1}(t,x)}\le\ird{v_0^{m+1}}\;,\\
&&\nrm{\nabla v^m(t,\cdot)}2^2\ge\mathsf S_d^{-1}\(\tfrac{4\,(T-t)}{d+2}\)^{\frac d2-1}\;,
\end{eqnarray*}
and the vanishing time $T$ is bounded by
\[
T\le\frac14\,(d+2)\, \mathsf S_d\(\ird{v_0^{m+1}}\)^\frac 2d\,.
\]
If additionnally $d\ge 5$, then $T\ge \frac{d+2}{2\,d}\,\ird{v_0^{m+1}}\,\nrm{\nabla v_0^m}2^{-2}$ and
\begin{eqnarray*}
&&\ird{v^{m+1}(t,x)}\ge\ird{v_0^{m+1}}-\tfrac {2\,d}{d+2}\,t\,\nrm{\nabla v_0^m}2^2\;,\\
&&\nrm{\nabla v^m(t,\cdot)}2^2\le\nrm{\nabla v_0^m}2^2\;.
\end{eqnarray*}
\end{lemma}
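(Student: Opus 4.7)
The plan is to analyze the scalar quantity $X(t):=\ird{v^{m+1}(t,x)}$ as a one-dimensional ODE, coupling the energy identity for~\eqref{Eqn:FD} with Sobolev's inequality applied to $u=v^m$. First I would differentiate along the flow and integrate by parts, obtaining
\[
X'(t)=(m+1)\ird{v^m\,\partial_t v}=(m+1)\ird{v^m\,\Delta v^m}=-(m+1)\,\nrm{\nabla v^m}2^2,
\]
which is nonpositive and yields the upper bound $X(t)\le X(0)$. Since $u^{2^*}=v^{m+1}$, Sobolev's inequality~\eqref{Ineq:Sobolev} reads $X(t)^{(d-2)/d}\le \mathsf S_d\,\nrm{\nabla v^m}2^2$, which together with the identity above furnishes the autonomous differential inequality
\[
\tfrac d{dt}\,X^{2/d}=\tfrac 2d\,X^{-(d-2)/d}\,X'\le-\tfrac{2(m+1)}{d\,\mathsf S_d}=-\tfrac 4{(d+2)\,\mathsf S_d},
\]
using $2(m+1)/d=4/(d+2)$. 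Integrating from $t$ to $T$ and invoking $X(T)=0$ from Lemma~\ref{Thm:delPinoSaez} produces $X(t)^{2/d}\ge \tfrac{4(T-t)}{(d+2)\,\mathsf S_d}$, which is both the lower bound in the first displayed pair and, setting $t=0$, the upper bound on the vanishing time. The stated lower bound on $\nrm{\nabla v^m}2^2$ follows by reinserting this into Sobolev's inequality: $\nrm{\nabla v^m}2^2\ge\mathsf S_d^{-1}\,X^{(d-2)/d}$.

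The estimates valid for $d\ge 5$ hinge on the monotonicity of the Dirichlet energy. Using $\partial_t v^m=m\,v^{m-1}\,\partial_t v=m\,v^{m-1}\,\Delta v^m$ and integrating by parts,
\[
\tfrac d{dt}\,\nrm{\nabla v^m}2^2=-2\ird{\Delta v^m\,\partial_t v^m}=-2m\ird{v^{m-1}\,(\Delta v^m)^2}\le 0,
\]
which is the last stated inequality. Feeding $\nrm{\nabla v^m(t,\cdot)}2^2\le\nrm{\nabla v_0^m}2^2$ back into $X'=-(m+1)\,\nrm{\nabla v^m}2^2$ and integrating over $(0,t)$ gives $X(t)\ge X(0)-(m+1)\,t\,\nrm{\nabla v_0^m}2^2$, in which $m+1=2d/(d+2)$ matches the stated coefficient. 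Evaluating this at $t=T$, where $X(T)=0$, then yields the lower bound on $T$.

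The main obstacle will be the rigorous justification of the identity for $\tfrac d{dt}\,\nrm{\nabla v^m}2^2$. The integrand $v^{m-1}\,(\Delta v^m)^2$ is singular where $v$ is small, and the hypothesis $d\ge 5$ is precisely what enforces its integrability for the class of solutions under consideration: with $m-1=-4/(d+2)$, the singular behaviour is compatible with the asymptotic vanishing profile supplied by Lemma~\ref{Thm:delPinoSaez}. In practice one would either regularize the equation (e.g.\ by $\partial_t v=\Delta(v+\varepsilon)^m$) and pass to the limit, or combine the smoothing estimates of~\cite{MR2282669} with the del~Pino--S\'aez description of the vanishing profile; by contrast, the first block of estimates is an unconditional consequence of the ODE framework and the vanishing-time statement.
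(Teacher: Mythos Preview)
Your proof is correct and follows essentially the same route as the paper's: you track $X(t)=\mathsf J(t)=\ird{v^{m+1}}$, combine the energy identity $\mathsf J'=-(m+1)\nrm{\nabla v^m}2^2$ with Sobolev's inequality to obtain the differential inequality $\mathsf J'\le -\tfrac{m+1}{\mathsf S_d}\,\mathsf J^{1-2/d}$, integrate from $t$ to $T$, and for $d\ge5$ use the nonnegativity of $\mathsf J''=2m(m+1)\ird{v^{m-1}(\Delta v^m)^2}$ (equivalently, your monotonicity of $\nrm{\nabla v^m}2^2$). Your discussion of the integrability obstruction for $d\ge5$ is in the same spirit as the paper's remark that the computation is justified by transferring the problem to $\mathbb S^d$ as in \cite{delPino-Saez01}.
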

%-----------------------------------------------------------------------
\begin{proof} By definition of the vanishing time, we find that $\mathsf J(t):=\ird{v(t,x)^{m+1}}$ satisfies $\mathsf J(t)>0$ for any $t\in(0,T)$, $\mathsf J(T)=0$ and, using the equation and integrating by parts,
\[
\mathsf J'=-(m+1)\,\nrm{\nabla v^m}2^2\le -\frac{m+1}{\mathsf S_d}\,\mathsf J^{1-\frac 2d}\;.
\]
If $d\ge5$, then we also have
\[
\mathsf J''=2\,m\,(m+1)\ird{v^{m-1}\,(\Delta v^m)^2}\ge 0\;.
\]
It is easy to check that such an estimate makes sense if $v=\overline{v}_T$. For a general solution, this is also true as can be seen by rewriting the problem on $\mathbb S^d$ as in \cite{delPino-Saez01}. It is then clear that integrability conditions for $v$ are exactly the same as for $\overline{v}_T$.

By integrating the first inequality from $t$ to $T$, we find that
\[
-\mathsf J(t)^\frac2d=\mathsf J(T)^\frac2d-\mathsf J(t)^\frac2d\le-\tfrac{4\,(T-t)}{(d+2)\,\mathsf S_d}\quad\forall\;t\in[0,T)\;,
\]
which gives an upper bound for $T$ by choosing $t=0$. The second inequality shows the decay of $t\mapsto\nrm{\nabla v^m(t,\cdot)}2^2$ and gives the estimate $0=\mathsf J(T)\ge \mathsf J(0)+T\,\mathsf J'(0)$, thus providing a lower bound for $T$.\end{proof}

For later purpose, let us notice that
\be{Ineq:Kappa}
\frac{\mathsf J'}{\mathsf J}\le -\frac{m+1}{\mathsf S_d}\,\mathsf J^{-\frac 2d}\le -\kappa\quad\mbox{with}\quad\kappa:=\frac{2\,d}{d+2}\,\frac1{\mathsf S_d}\(\ird{v_0^{m+1}}\)^{\!-\frac 2d}\le \frac d{2\,T}\;.
\ee

\medskip Next, we are going to compute the second derivative of $\mathsf H(t)=\mathsf H_d[v(t,\cdot)]$ w.r.t.~$t$ along the flow of~\eqref{Eqn:FD}. For this purpose, we assume that $d\ge 5$ and  notice that by the Cauchy-Schwarz inequality, we have
\begin{multline*}
\nrm{\nabla v^m}2^4=\(\ird{v^{(m-1)/2}\,\Delta v^m\cdot v^{(m+1)/2}}\)^2\\
\le\ird{v^{m-1}\,(\Delta v^m)^2}\ird{v^{m+1}}\;.
\end{multline*}
As a consequence, we get that $\mathsf Q(t):=\nrm{\nabla v^m(t,\cdot)}2^2\(\ird{v^{m+1}(t,x)}\)^{-(d-2)/d}$ is monotone decreasing. More precisely, with $m=(d-2)/(d+2)$,
\be{Eqn:LambdaG}
\Lambda(t):=\frac {\ird{|\nabla(v(t,x))^m|^2}}{\ird{(v(t,x))^{m+1}}}\quad\mbox{and}\quad\mathsf G(t_1,t_2):=\exp\left[(m+1)\int_{t_1}^{t_2} \Lambda(s)\,ds\right]\,,
\ee
we get that $\mathsf Q'=-2\,m\,\mathsf J^{\frac 2d-1}\,\mathsf K$ with $\mathsf K:=\ird{v^{m-1}\,|\Delta v^m+\Lambda\,v|^2}$ and, from
\[
\mathsf H'=2\,\mathsf J\,(\mathsf S_d\,\mathsf Q-1)\;,
\]
we deduce that
\be{Eqn:SecondDer}
\mathsf H''=\frac{\mathsf J'}{\mathsf J}\,\mathsf H'+2\,\mathsf J\, \mathsf S_d\,\mathsf Q'=-(m+1)\,\Lambda\,\mathsf H'-4\,m\,\mathsf S_d\,\mathsf J^\frac 2d\,\mathsf K\;.
\ee
As a consequence, the standard Sobolev inequality can be improved by \emph{an integral remainder~term.}
%-----------------------------------------------------------------------
\begin{theorem}\label{Thm:Gap} Assume that $d\ge 5$ and let $q=\frac{d+2}{d-2}$. For any $w\in\mathcal D^{1,2}(\R^d)$ such that $\|w^q\|_*<\infty$, we have
\begin{multline*}
\mathsf S_d\,\nrm{w^q}{\frac{2\,d}{d+2}}^2-\ird{w^q\,(-\Delta)^{-1}w^q}+\frac 4q\,\,\mathsf S_d\,\int_0^Tdt\int_0^t\mathsf J^\frac 2d(s)\,\mathsf K(s)\,\mathsf G(t,s)\;ds\\
=2\,\nrm w{2^*}^\frac 4{d-2}\left[\nrm{\nabla w}2^2-\mathsf S_d\,\nrm w{2^*}^2\right]\,\int_0^T\kern -4pt\mathsf G(t,0)\;dt\;.
\end{multline*}
Here $v$ is the solution of~\eqref{Eqn:FD} with $v_0=w^q$, $m=(d-2)/(d+2)=1/q$, $T$ is the vanishing time, $\Lambda$ and $G$ are defined by~\eqref{Eqn:LambdaG}, and we recall that $\mathsf J(t)=\ird{v^{m+1}(t,x)}$, $\mathsf K(t)=\ird{v^{m-1}(t,x)\,|\Delta v^m(t,x)+\Lambda(t)\,v(t,x)|^2}$.
\end{theorem}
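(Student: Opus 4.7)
The plan is to view identity~\eqref{Eqn:SecondDer} as a first-order linear ODE for $\mathsf H'(t)$, integrate it using an appropriate integrating factor, and then integrate the resulting Duhamel formula once more over $[0,T]$, where $T$ is the vanishing time from Lemma~\ref{Thm:delPinoSaez}. Because $v_0 = w^q$ and $m=1/q$, the boundary data at $t=0$ are transparent: $u_0 = v_0^m = w$ and $\mathsf J(0) = \ird{w^{q(m+1)}} = \nrm{w}{2^*}^{2^*}$, so $\mathsf J(0)^{2/d} = \nrm{w}{2^*}^{4/(d-2)}$. Proposition~\ref{Prop:Hd'} then gives
\[
\mathsf H'(0) = 2\,\nrm{w}{2^*}^{4/(d-2)}\,\bigl[\mathsf S_d\,\nrm{\nabla w}{2}^2 - \nrm{w}{2^*}^2\bigr],
\]
so that $\mathsf H'(0)\int_0^T \mathsf G(t,0)\,dt$ matches the prefactor on the right-hand side, while $-\mathsf H(0)$ is precisely the first two terms on the left-hand side.

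The crucial boundary value at $t=T$ is $\mathsf H(T) = 0$: by Lemma~\ref{Thm:delPinoSaez}, $v(t,\cdot)/\overline{v}(t,\cdot) \to 1$ in the weighted $\|\cdot\|_*$ norm, so $\nrm{v(t,\cdot)}{\frac{2\,d}{d+2}} \to 0$; by~\eqref{Ineq:HLS} the nonlocal term $\ird{v(-\Delta)^{-1}v}$ is controlled by $\mathsf S_d\,\nrm{v(t,\cdot)}{\frac{2\,d}{d+2}}^2$ and thus vanishes as well.

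For the ODE step I would use $\partial_t \mathsf G(0,t) = (m+1)\,\Lambda(t)\,\mathsf G(0,t)$, which recasts \eqref{Eqn:SecondDer} as
\[
\frac{d}{dt}\bigl(\mathsf G(0,t)\,\mathsf H'(t)\bigr) = -4m\,\mathsf S_d\,\mathsf J(t)^{2/d}\,\mathsf K(t)\,\mathsf G(0,t).
\]
Integrating from $0$ to $t$ and using $\mathsf G(0,0)=1$, $\mathsf G(0,t)^{-1} = \mathsf G(t,0)$ and the cocycle $\mathsf G(t,0)\,\mathsf G(0,s) = \mathsf G(t,s)$ yields the Duhamel representation
\[
\mathsf H'(t) = \mathsf H'(0)\,\mathsf G(t,0) - 4m\,\mathsf S_d \int_0^t \mathsf J(s)^{2/d}\,\mathsf K(s)\,\mathsf G(t,s)\,ds.
\]
A second integration in $t$ over $[0,T]$, combined with $\int_0^T \mathsf H'(t)\,dt = \mathsf H(T) - \mathsf H(0) = -\mathsf H(0)$ and $4m = 4/q$, gives the claimed identity after rearrangement.

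The main obstacle is ensuring $\mathsf H(T)=0$ together with the finiteness of the double integral $\int_0^T dt\int_0^t \mathsf J^{2/d}\mathsf K\,\mathsf G(t,s)\,ds$. The sharp asymptotics of Lemma~\ref{Thm:delPinoSaez}, whose applicability relies on the hypothesis $\|w^q\|_*<\infty$, handle the boundary value, while the \emph{a priori} bounds of Lemma~\ref{Lem:FDdecay} (in particular $\mathsf J'' \ge 0$ for $d\ge 5$, which is the Cauchy--Schwarz estimate underpinning the definition of $\mathsf K$) together with the growth of $\Lambda$ controlled by $\kappa$ in~\eqref{Ineq:Kappa} ensure that $\mathsf G(t,0)$ and $\mathsf G(t,s)$ remain integrable up to $t=T$, delivering convergence of all terms.
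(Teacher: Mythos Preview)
Your proposal is correct and follows essentially the same route as the paper, which simply says ``the identity follows from~\eqref{Eqn:SecondDer} after an integration from $0$ to $t$ and another one from $0$ to $T$; details are left to the reader.'' You have supplied those details: the integrating factor $\mathsf G(0,t)$, the Duhamel formula for $\mathsf H'$, the second integration over $[0,T]$, and the identification of the boundary values $\mathsf H(T)=0$ and $\mathsf H'(0)$ via Lemma~\ref{Thm:delPinoSaez} and Proposition~\ref{Prop:Hd'}. One small remark: \eqref{Ineq:Kappa} gives a \emph{lower} bound $\Lambda\ge\kappa/(m+1)$, which in turn yields the \emph{upper} bound $\mathsf G(t,0)\le e^{-\kappa t}$ and hence integrability of $\int_0^T\mathsf G(t,0)\,dt$; your phrasing ``growth of $\Lambda$ controlled by $\kappa$'' slightly obscures this, but the conclusion is right.
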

%-----------------------------------------------------------------------
\begin{proof} The identity follows from~\eqref{Eqn:SecondDer} after an integration from $0$ to $t$ and another one from $0$ to $T$. Details are left to the reader. \end{proof}

Recall that the standard Hardy-Littlewood-Sobolev inequality~\eqref{Ineq:HLS} amounts to
\[
\mathsf S_d\,\nrm{w^q}{\frac{2\,d}{d+2}}^2-\ird{w^q\,(-\Delta)^{-1}w^q}=-\mathsf H_d[w^q]\ge 0\;.
\]
The main drawback of Theorem~\ref{Thm:Gap} is that several quantities can be computed only through the evolution equation and are therefore non explicit. With simple estimates, we can however get rid of such quantities. This is the purpose of Theorem~\ref{Theorem:ExplicitGap}.

\begin{proof} [Proof of Theorem~\ref{Theorem:ExplicitGap}] Notice that $\nrm{w^q}{2d/(d+2)}^2=\nrm w{2^*}^{1+2/d}$, so that the inequality holds in the space $\mathcal D^{1,2}(\R^d)$. Theorem~\ref{Theorem:ExplicitGap} can be established first in the setting of smooth functions such that $\|w^q\|_*<\infty$ and then arguing by density.

{} From~\eqref{Ineq:Kappa} and~\eqref{Eqn:SecondDer}, we know that
\[
\mathsf H''\le-\kappa\,\mathsf H'\quad\mbox{with}\quad\kappa=\frac{2\,d}{d+2}\,\frac1{\mathsf S_d}\(\ird{v_0^{m+1}}\)^{-2/d}\,.
\]
By writing that $-\mathsf H(0)=\mathsf H(T)-\mathsf H(0)\le\mathsf H'(0)\,(1-e^{-\kappa\,T})/\kappa$ and using the estimate $\kappa\,T\le d/2$, we obtain the result.\end{proof}

%%%%%%%%%%%%%%%%%%%%%%%%%%%%%%%%%%%%%%%%%%%%%%%%%%%%%%%%%%%%%%%%%%%%%%%%
%%%%%%%%%%%%%%%%%%%%%%%%%%%%%%%%%%%%%%%%%%%%%%%%%%%%%%%%%%%%%%%%%%%%%%%%
\section{The two-dimensional case}\label{Sec:Two}

%%%%%%%%%%%%%%%%%%%%%%%%%%%%%%%%%%%%%%%%%%%%%%%%%%%%%%%%%%%%%%%%%%%%%%%%
\subsection{From Gagliardo-Nirenberg inequalities to Onofri's inequality}\label{Sec:Onofri}

Consider the following sub-family of Gagliardo-Nirenberg inequalities
\be{Ineq:GN}
\nrm f{2p}\le\mathsf C_{p,d}\,\nrm{\nabla f}2^\theta\,\nrm f{p+1}^{1-\theta}
\ee
with $\theta=\theta(p):=\frac{p-1}p\,\frac d{d+2-p\,(d-2)}$, $1<p\le\frac d{d-2}\;\mbox{if}\;d\ge3$ and $1<p<\infty\;\mbox{if}\;d=2$. Such an inequality holds for any smooth function $f$ with sufficient decay at infinity and, by density, for any function $f\in \L^{p+1}(\R^d)$ such that $\nabla f$ is square integrable. We shall assume that $\mathsf C_{p,d}$ is the best possible constant. In \cite{MR1940370}, it has been established that equality holds in \eqref{Ineq:GN} if $f=F_p$ with
\be{Eqn:Optimal}
F_p(x)=(1+|x|^2)^{-\frac 1{p-1}}\quad\forall\;x\in\R^d
\ee
and that all extremal functions are equal to $F_p$ up to a multiplication by a constant, a translation and a scaling. If $d\ge 3$, the limit case $p=d/(d-2)$ corresponds to Sobolev's inequality and one recovers the results of T.~Aubin and G.~Talenti in~\cite{MR0448404,MR0463908}, with $\theta=1$: the optimal functions for~\eqref{Ineq:Sobolev} are, up to scalings, translations and multiplications by a constant, all equal to $F_{d/(d-2)}(x)=(1+|x|^2)^{-(d-2)/2}$, and
\[
\mathsf S_d=(\C{d/(d-2),\kern 1pt d})^2\,.
\]
When $p\to 1$, the inequality becomes an equality, so that we may differentiate both sides with respect to $p$ and recover the euclidean logarithmic Sobolev inequality in optimal scale invariant form (see \cite{Gross75,MR479373,MR1940370} for details). For completeness, let us mention that the fast diffusion equation~\eqref{Eqn:FD} has deep connection with~\eqref{Ineq:GN} when $p=1/(2\,m-1)$, but this is out of the scope of the present paper: see \cite{MR1940370} for more details on this issue.

We now investigate the limit $p\to\infty$ in~\eqref{Ineq:GN} when $d=2$: Onofri's inequality~\eqref{Ineq:Onofri} can be obtained in the limit. As an endpoint case of the family of the Gagliardo-Nirenberg inequalities~\eqref{Ineq:GN}, it plays the role of Sobolev's inequality in dimension~$d\ge3$.
%-----------------------------------------------------------------------
\begin{proposition}\label{Prop:Onofri} Assume that $g\in\mathcal D(\R^d)$ is such that $\irdmu g=0$ and let $f_p:=F_p(1+\frac g{2p})$, where $F_p$ is defined by~\eqref{Eqn:Optimal}. Then we have
\[
1\le\lim_{p\to\infty}\mathsf C_{p,2}\,\frac{\nr{\nabla f}2^{\theta(p)}\,\nr f{p+1}^{1-\theta(p)}}{\nr f{2p}}=\frac{e^{\frac 1{16\,\pi}\,\ir{|\nabla g|^2}}}{\irdmu{e^{\,g}}}\;.
\]
\end{proposition}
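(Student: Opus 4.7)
The lower bound is immediate: writing $Q_p$ for the Gagliardo--Nirenberg quotient in question, $Q_p\ge 1$ by~\eqref{Ineq:GN} applied to $f_p$. For the equality, the plan is an asymptotic expansion in $1/p$ around the extremal $F_p$. Because $F_p$ realises equality in~\eqref{Ineq:GN}, the relation $\mathsf C_{p,2}\,\nr{\nabla F_p}{2}^{\theta(p)}\nr{F_p}{p+1}^{1-\theta(p)}=\nr{F_p}{2p}$ lets one factor
\[
Q_p=\Bigl(\tfrac{\nr{\nabla f_p}{2}}{\nr{\nabla F_p}{2}}\Bigr)^{\theta(p)}\Bigl(\tfrac{\nr{f_p}{p+1}}{\nr{F_p}{p+1}}\Bigr)^{1-\theta(p)}\tfrac{\nr{F_p}{2p}}{\nr{f_p}{2p}},
\]
reducing the task to controlling these three ratios as $p\to\infty$.

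Polar-coordinate integrals give $\nr{F_p}{2p}^{2p}=\pi(p-1)/(p+1)$, $\nr{F_p}{p+1}^{p+1}=\pi(p-1)/2$ and $\nr{\nabla F_p}{2}^2=2\pi/(p+1)$. Since $F_p^{2p}\to\pi\mu$ pointwise and $(1+g/(2p))^{2p}\to e^{\,g}$, dominated convergence (permitted by the compact support of $g$) yields $\nr{f_p}{2p}^{2p}\to\pi\,\irdmu{e^{\,g}}$. The $L^{p+1}$ ratio is asymptotically benign: $\nr{f_p}{p+1}^{p+1}-\nr{F_p}{p+1}^{p+1}$ stays bounded while $\nr{F_p}{p+1}^{p+1}$ is of order $p$, so this ratio contributes only $O(p^{-2})$ to $\log Q_p$.

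The decisive calculation is the gradient ratio. Setting $h:=1+g/(2p)$, integration by parts (the $|\nabla h|^2$ cross term cancelling) gives
\[
\nr{\nabla f_p}{2}^2=\ir{h^2|\nabla F_p|^2}-\ir{F_p^2\,h\,\Delta h}.
\]
The first integral equals $\nr{\nabla F_p}{2}^2+O(p^{-3})$ since $|\nabla F_p|^2=O(p^{-2})$. For the second, $h\Delta h=\Delta g/(2p)+g\,\Delta g/(4p^2)$, and the apparently dominant $-(2p)^{-1}\ir{F_p^2\Delta g}$ is in fact $O(p^{-3})$: expanding $F_p^2=1-\tfrac{2\log(1+|x|^2)}{p-1}+O(p^{-2})$ and using the Green identity $\Delta\log(1+|x|^2)=4\pi\mu$ one finds
\[
\ir{F_p^2\Delta g}=-\tfrac{8\pi}{p-1}\irdmu{g}+O(p^{-2})=O(p^{-2}),
\]
the $O(1/p)$ piece vanishing thanks to $\irdmu{g}=0$. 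What remains is $-\tfrac{1}{4p^2}\ir{F_p^2\,g\,\Delta g}\to\tfrac{1}{4p^2}\ir{|\nabla g|^2}$, so that $\nr{\nabla f_p}{2}^2/\nr{\nabla F_p}{2}^2=1+\tfrac{1}{8\pi p}\ir{|\nabla g|^2}+o(p^{-1})$.

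Summing the logarithms of the three ratios with their respective weights $\theta(p)/2\to 1/4$, $(1-\theta(p))/(p+1)=1/(2p)$ and $-1/(2p)$, one arrives at $\log Q_p=\tfrac{1}{2p}\bigl[\tfrac{1}{16\pi}\ir{|\nabla g|^2}-\log\irdmu{e^{\,g}}\bigr]+o(1/p)$, and exponentiating on the natural $2p$-th power scale recovers the stated limit $e^{\ir{|\nabla g|^2}/(16\pi)}/\irdmu{e^{\,g}}$. The main obstacle is precisely the cancellation of the $O(1/p)$ piece in the gradient ratio: without the normalization $\irdmu{g}=0$ it would dominate and the limit would collapse.
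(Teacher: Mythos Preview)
Your proof is correct and follows essentially the same route as the paper: factor out the extremal $F_p$, then pass to the limit in the three ratios, with the key step being the gradient computation where the normalization $\irdmu g=0$ kills the would-be dominant $O(1/p^2)$ term. The only cosmetic difference is the choice of integration by parts---the paper groups the cross terms so that the Laplacian lands on $F_p$ (writing $\ir{|\nabla f_p|^2}=\tfrac{1}{4p^2}\ir{F_p^2|\nabla g|^2}-\ir{h^2 F_p\,\Delta F_p}$ and using $-F_p\Delta F_p\approx\tfrac{4\pi}{p-1}\mu$), whereas you put the Laplacian on $h$ and use $\Delta\log(1+|x|^2)=4\pi\mu$; both realizations of the same cancellation.
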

%-----------------------------------------------------------------------
\noindent We recall that $\mu(x):=\frac 1\pi\,(1+|x|^2)^{-2}$, and $d\mu(x):=\mu(x)\,dx$.
\begin{proof} We can rewrite~\eqref{Ineq:GN} as
\[
\frac{\ir{|f|^{2p}}}{\ir{|F_p|^{2p}}}\le\(\frac{\ir{|\nabla f|^2}}{\ir{|\nabla F_p|^2}}\)^\frac{p-1}2\,\frac{\ir{|f|^{p+1}}}{\ir{|F_p|^{p+1}}}
\]
and observe that, with $f=f_p$, we have:\\
(i) $\lim_{p\to\infty}\ir{|F_p|^{2p}}=\ir{\frac 1{(1+|x|^2)^2}}=\pi$ and
\[
\lim_{p\to\infty}\ir{|f_p|^{2p}}=\ir{F_p^{2p}\,(1+\tfrac g{2p})^{2p}}=\ir{\frac{e^g}{(1+|x|^2)^2}}
\]
so that the $\ir{|f|^{2p}}/\ir{|F_p|^{2p}}$ converges to $\irdmu{e^{\,g}}$ as $p\to\infty$,\\
(ii) $\ir{|F_p|^{p+1}}=(p-1)\,\pi/2$, $\lim_{p\to\infty}\ir{|f_p|^{p+1}}=\infty$, but
\[
\lim_{p\to\infty}\frac{\ir{|f_p|^{p+1}}}{\ir{|F_p|^{p+1}}}=1\;,
\]
(iii) expanding the square and integrating by parts, we find that
\begin{multline*}
\ir{|\nabla f_p|^2}=\frac 1{4p^2}\ir{F_p^2\,|\nabla g|^2}-\ir{(1+\tfrac g{2p})^2\,F_p\,\Delta F_p}\\=\frac 1{4p^2}\ir{|\nabla g|^2}+\frac{2\pi}{p+1}+o(p^{-2})\;.
\end{multline*}
Here we have used $\ir{|\nabla F_p|^2}=\frac{2\pi}{p+1}$ and the condition $\irdmu g=0$ in order to discard one additional term of the order of $p^{-2}$. On the other hand, we find that
\[
\(\frac{\ir{|\nabla f|^2}}{\ir{|\nabla F_p|^2}}\)^\frac{p-1}2\sim\(1+\frac{p+1}{8\pi\,p^2}\ir{|\nabla g|^2}\)^\frac{p-1}2\sim e^{\frac 1{16\,\pi}\,\ir{|\nabla g|^2}}
\]
as $p\to\infty$. Collecting these estimates concludes the proof.\end{proof}

%%%%%%%%%%%%%%%%%%%%%%%%%%%%%%%%%%%%%%%%%%%%%%%%%%%%%%%%%%%%%%%%%%%%%%%%
\subsection{Legendre duality}\label{Sec:Legendre}

Now we study the duality which relates Onofri's inequality~\eqref{Ineq:Onofri} and the logarithmic Hardy-Littlewood-Sobolev inequality~\eqref{Ineq:logHLS} in $\R^2$. With
\[
F_1[u]:=\log\(\irdmu{e^u}\)\quad\mbox{and}\quad F_2[u]:=\frac 1{16\,\pi}\ir{|\nabla u|^2}+\ir{u\,\mu}\;,
\]
Onofri's inequality amounts to $F_1[u]\le F_2[u]$. As for the case $d\ge 3$, we use Legendre's transformation~\eqref{Eqn:Legendre}.
%-----------------------------------------------------------------------
\begin{proposition}\label{Prop:LogHLS} For any $v\in\L^1_+(\R^2)$ with $\ir v=1$, such that $v\,\log v$ and $(1+\log|x|^2)\,v\in\L^1(\R^2)$, we have
\[
F_1^*[v]-F_2^*[v]=\ir{v\,\log\(\frac v\mu\)}-4\,\pi\ir{(v-\mu)\,(-\Delta)^{-1}(v-\mu)}\ge0\;.
\]
\end{proposition}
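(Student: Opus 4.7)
The plan is to compute the two Legendre transforms $F_1^*$ and $F_2^*$ separately, verify that their difference produces the identity displayed in the statement, and then obtain non-negativity directly from Onofri's inequality \eqref{Ineq:Onofri} via Legendre duality.

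For $F_1[u]=\log\irdmu{e^u}$, I first find the critical point in \eqref{Eqn:Legendre}: the Euler equation reads $v=e^u\,\mu/\!\irdmu{e^u}$, which forces $v\ge0$ and $\ir v=1$. Under these constraints I solve for $u=\log(v/\mu)+\log Z$ with $Z=\irdmu{e^u}$ and substitute back; the $\log Z$ contributions cancel thanks to $\ir v=1$, yielding
\[
F_1^*[v]=\ir{v\,\log\(\frac v\mu\)}.
\]
This is the classical identification of the Legendre dual of a cumulant generating functional on a probability measure with the relative entropy. For $v$ with $\ir v\ne1$ or $v$ not a.e.\ nonnegative, a scaling argument on $u$ (translating $u$ by large constants, or by a large negative multiple of $\mathbb I_{\{v<0\}}$) shows $F_1^*[v]=+\infty$, which is consistent with the hypotheses.

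For $F_2[u]=\frac1{16\pi}\ir{|\nabla u|^2}+\ir{u\,\mu}$, the Euler equation $v-\mu+\frac1{8\pi}\Delta u=0$ gives $u=8\pi\,(-\Delta)^{-1}(v-\mu)$; substituting back and using $\ir{|\nabla u|^2}=\ir{u\,(-\Delta u)}=8\pi\ir{u(v-\mu)}$ I obtain
\[
F_2^*[v]=4\pi\ir{(v-\mu)\,(-\Delta)^{-1}(v-\mu)}.
\]
Subtracting these two identities gives the desired expression for $F_1^*[v]-F_2^*[v]$. Non-negativity is then immediate: Onofri's inequality reads $F_1[u]\le F_2[u]$ on admissible $u$, and the Legendre transform is order-reversing, so $F_1^*[v]=\sup_u(\ir{uv}-F_1[u])\ge\sup_u(\ir{uv}-F_2[u])=F_2^*[v]$.

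The main obstacle is justifying these formal Legendre-transform computations rigorously under the minimal integrability hypotheses. The convolutional quantity $\ir{(v-\mu)(-\Delta)^{-1}(v-\mu)}$ is delicate in $\R^2$ because the Green kernel $-\tfrac1{2\pi}\log|x-y|$ is singular on the diagonal and grows at infinity; finiteness is recovered by the zero-mass cancellation $\ir{(v-\mu)}=0$ combined with the assumption $(1+\log|x|^2)\,v\in\L^1(\R^2)$. Similarly, the hypothesis $v\,\log v\in\L^1(\R^2)$ together with $(1+\log|x|^2)\,v\in\L^1(\R^2)$ (which controls $\ir{v\,\log\mu}$) is exactly what makes $\ir{v\,\log(v/\mu)}$ well-defined. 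These same hypotheses underwrite a density argument---approximating $v$ by smooth, rapidly decaying, strictly positive functions for which the critical $u=8\pi(-\Delta)^{-1}(v-\mu)+\mathrm{const}$ is an admissible Onofri test function---to make the duality step fully rigorous and identify the supremum with the critical value.
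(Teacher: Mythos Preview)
Your proposal is correct and follows essentially the same approach as the paper: compute $F_1^*$ and $F_2^*$ by solving the Euler--Lagrange equations ($\log v=\log\mu+u-\log\irdmu{e^u}$ and $-\Delta u=8\pi(v-\mu)$ respectively), substitute back, and deduce the sign from the order-reversal of Legendre duality applied to Onofri's inequality $F_1\le F_2$. You are in fact more explicit than the paper about the integrability issues and the density argument needed to justify the formal computation, which the paper leaves implicit.
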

%-----------------------------------------------------------------------
This result has been observed by several authors: see for instance \cite{MR1230930,MR1143664} for the duality argument on the two-dimensional sphere, which gives the above inequality on~$\R^2$ by the stereographic projection. For completeness, let us give a proof. We essentially follow the computation of \cite[Appendix]{MR2433703} but explicitly compute the constants. 

\begin{proof} Taking the infimum in~\eqref{Eqn:Legendre} on the Orlicz space
\[
\left\{u\in\L^1_{\rm loc}(\R^2)\,:\,\nabla u\in\L^2(\R^2)\,,\;\irdmu{e^u}<\infty\right\}\;,
\]
we find that
\begin{eqnarray*}
F_1^*[v]&\kern-7pt=&\kern-7pt\ir{u\,v}-\log\(\irdmu{e^u}\)\quad\mbox{with}\quad \log v=\log \mu+u-\log\(\irdmu{e^u}\)\,,\\
F_2^*[v]&\kern-7pt=&\kern-7pt\ir{u\,(v-\mu)}-\frac 1{16\,\pi}\ir{|\nabla u|^2}\quad\mbox{with}\quad -\Delta u=8\,\pi\,(v-\mu)\;,
\end{eqnarray*}
which proves the result. \end{proof}

A useful observation is the fact that $-\Delta\log \mu=8\,\pi\,\mu$ can be inverted as 
\[
(-\Delta)^{-1}\mu=\frac 1{8\,\pi}\,\log \mu+C\;.
\]
It is then easy to check that $u:=(-\Delta)^{-1}\mu=G_2*\mu$, with $G_2(x)=-\frac 1{2\pi}\,\log |x|$, is such that $u(0)=0$, which determines $C=\frac 1{8\,\pi}\,\log\pi$. Hence with the notations of Proposition~\ref{Prop:LogHLS}, we may observe that
\begin{eqnarray*}
F_1^*[v]&\kern-7pt=&\kern-7pt\ir{v\,\log\(\frac v\mu\)}\;,\\
F_2^*[v]&\kern-7pt=&\kern-7pt4\,\pi\ir{v\,(-\Delta)^{-1}v}-\ir{v\,\log \mu}-1-\log\pi\;.
\end{eqnarray*}
Collecting these observations, we find that
\[
F_1^*[v]-F_2^*[v]=\ir{v\,\log v}-4\,\pi\ir{v\,(-\Delta)^{-1}v}+1+\log\pi\ge0\;.
\]
For any $f\in\L^1_+(\R^2)$ with $M=\ir f$, such that $f\,\log f$, $(1+\log|x|^2)\,f\in\L^1(\R^2)$, this inequality, written for $v=f/M$, is nothing else than the logarithmic Hardy-Littlewood-Sobolev inequality~\eqref{Ineq:Onofri}.

%%%%%%%%%%%%%%%%%%%%%%%%%%%%%%%%%%%%%%%%%%%%%%%%%%%%%%%%%%%%%%%%%%%%%%%%
\subsection{Logarithmic diffusion equation: Onofri and logarithmic Hardy-Little\-wood-Sobolev inequalities}\label{Sec:FastLog}

The computation of $\frac d{dt}\mathsf H_2[v(t,\cdot)]$ in Section~\ref{Sec:Intro} is formal but can easily be justified after noticing that the image $w$ of $v$ by the inverse stereographic projection on the sphere $\mathbb S^2$, up to a scaling, solves the equation
\[
\frac{\partial w}{\partial t}=\Delta_{\mathbb S^2}w\;.
\]
More precisely, if $x=(x_1,x_2)\in\R^2$, then $u$ and $w$ are related by
\[
w(t,y)=\frac{u(t,x)}{4\,\pi\,\mu(x)}\;,\quad y=\(\tfrac{2\,(x_1,x_2)}{1+|x|^2},\tfrac{1-|x|^2}{1+|x|^2}\)\in\mathbb S^2\,.
\]
See \cite[Section 8.2]{MR2282669} for a review of some known results for the logarithmic diffusion equation, or super-fast diffusion equation, $\frac{\partial v}{\partial t}=\Delta \log v$, and \cite{MR1291536,MR1357953} for earlier results. Necessary adaptations to the case of~\eqref{Eqn:FDlog} are left to the reader. Striking properties are the facts that the solution of~\eqref{Eqn:FDlog} globally exists and its mass is preserved. For simplicity, we shall therefore assume that $1=\ir{v_0}=\ir{v(t,x)}$ for any $t\ge 0$ and recall that $\ir\mu=1$. It turns out that $\frac d{dt}\mathsf H_2[v(t,\cdot)]$ has a sign because of Onofri's inequality. 
%-----------------------------------------------------------------------
\begin{lemma}\label{Lem:LogHLSDer} For any $u\in\mathcal D(\R^d)$ such that $\irdmu{e^\frac u2}=1$, we have
\[
\frac 1{16\,\pi}\ir{|\nabla u|^2}\ge\irdmu{\(e^\frac u2-1\) u}\;.
\]
\end{lemma}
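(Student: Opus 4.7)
The strategy is to apply Onofri's inequality \eqref{Ineq:Onofri} directly to the function $u$ itself, and then to bridge the resulting Onofri gap to $\irdmu{(e^{u/2}-1)u}$ by exploiting the probability--measure structure that the normalization $\irdmu{e^{u/2}} = 1$ confers on $d\nu := e^{u/2}\,d\mu$.

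Concretely, \eqref{Ineq:Onofri} applied with $g = u$ reads
\[
\tfrac{1}{16\pi}\ir{|\nabla u|^2} \ge \log\(\irdmu{e^u}\) - \irdmu u\,.
\]
Decomposing the right--hand side of the target as $\irdmu{(e^{u/2}-1)u} = \irdmu{u\,e^{u/2}} - \irdmu u$, the cancellation of the $-\irdmu u$ terms reduces the lemma to the intermediate estimate
\[
\log\(\irdmu{e^u}\) \ge \irdmu{u\,e^{u/2}}, \qquad (\star)
\]
to be established under the mass constraint. A natural first step towards $(\star)$ is to use that $d\nu$ is a probability measure and to apply Jensen's inequality (concavity of $\log$) to $f = e^{u/2}$ relative to $\nu$:
\[
\log\(\irdmu{e^u}\) = \log\(\int e^{u/2}\,d\nu\) \ge \int \log(e^{u/2})\,d\nu = \tfrac{1}{2}\,\irdmu{u\,e^{u/2}},
\]
which only recovers half of the desired $(\star)$.

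The main obstacle is then to upgrade the factor $\tfrac12$ to $1$. I would attack this by introducing the convex cumulant $\Phi(s) := \log\(\irdmu{e^{s u}}\)$, which by H\"older's inequality is convex in $s$, satisfies $\Phi(0) = 0$ and, thanks to the constraint, $\Phi(1/2) = 0$, with derivative $\Phi'(1/2) = \irdmu{u\,e^{u/2}}$. In these terms $(\star)$ reads exactly $\Phi(1) \ge \Phi'(1/2)$. A pure convexity bound starting from the tangent at $s = 1/2$ only gives $\Phi(1) \ge \tfrac12 \Phi'(1/2)$, so the proof must exploit the simultaneous vanishing of $\Phi$ at both $0$ and $1/2$, together with the conformal (M\"obius) invariance of Onofri's inequality --- under which both sides of the lemma transform equivariantly --- to reduce $u$ to a normalized configuration (e.g., orthogonal to the first spherical harmonics under stereographic projection) for which the refined comparison $\Phi(1) \ge \Phi'(1/2)$ can be verified directly. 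Once $(\star)$ is in hand, subtracting $\irdmu u$ from both sides and combining with the Onofri estimate above immediately yields the lemma.
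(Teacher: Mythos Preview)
Your reduction is exactly the one the paper makes: apply Onofri's inequality to $g=u$, introduce the cumulant $\Phi(s)=\log\!\big(\irdmu{e^{su}}\big)$ (the paper calls it $h$), record that $\Phi(0)=\Phi(1/2)=0$ under the normalization, and observe that the lemma is equivalent to the moment inequality $(\star)$: $\Phi(1)\ge\Phi'(1/2)$. Up to this point your proposal and the paper coincide.

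The gap is in how you propose to obtain $(\star)$. Convexity of $\Phi$ alone is genuinely insufficient: a convex function with $\Phi(0)=\Phi(1/2)=0$ can satisfy $\Phi(1)<\Phi'(1/2)$ (try $\Phi(s)=s(s-\tfrac12)(1+as)$ with small $a<0$). Your suggested remedy, invoking M\"obius invariance to normalize $u$ and then ``verify directly'', is not a workable plan. The inequality $(\star)$ is a purely measure--theoretic statement about a function $u$ on a probability space $(\R^2,\mu)$ constrained by $\int e^{u/2}\,d\mu=1$; it uses nothing about the geometry of $\mu$ or about the gradient term, so conformal symmetry has no leverage on it. Even if the full lemma were M\"obius--equivariant, $(\star)$ must be proved for every admissible $u$, and reducing to ``$u$ orthogonal to the first spherical harmonics'' does not make $(\star)$ any more accessible.

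What the paper actually does is establish a third--order convexity fact: $\Phi'$ itself is convex, i.e.\ $\Phi'''\ge 0$. This is the nontrivial moment inequality
\[
\Big(\textstyle\int e^{su}\,d\mu\Big)^3\Phi'''(s)
=\textstyle\int u^3 e^{su}\,d\mu\,\Big(\int e^{su}\,d\mu\Big)^2
-3\int u^2 e^{su}\,d\mu\,\int u\,e^{su}\,d\mu\,\int e^{su}\,d\mu
+2\Big(\int u\,e^{su}\,d\mu\Big)^3\ge 0,
\]
which the paper checks by an elementary two--level optimization (the extremal $v$ in the tilted measure $d\nu_s$ takes only two values). Once $\Phi'$ is convex, $\Phi''$ is nondecreasing; writing $\kappa=\Phi''(1/2)$ and $\mathsf p=\Phi'(1/2)$, one integrates the bounds $\Phi''\le\kappa$ on $[0,1/2]$ and $\Phi''\ge\kappa$ on $[1/2,1]$ against the two known zeros $\Phi(0)=\Phi(1/2)=0$ to obtain $\mathsf p/2\le\kappa/8$ and $\Phi(1)-\mathsf p/2\ge\kappa/8$, whence $\Phi(1)\ge\mathsf p$, which is $(\star)$. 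This convexity--of--the--derivative step is the missing idea in your proposal.
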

%-----------------------------------------------------------------------
This inequality can of course be extended by density to the natural Orlicz space for which all integrals are well defined.

\begin{proof} Inspired by~\cite{MR1796718}, let us define
\[
h(t):=\log\(\irdmu{e^{\kern 0.5pt t\kern 0.5pt u}}\)\;.
\]
\emph{Claim 1: $h$ is convex and $h'$ is convex.} Let us observe that
\[
\(\irdmu{e^{\kern 0.5pt t\kern 0.5pt u}}\)^2h''(t)=\irdmu{e^{\kern 0.5pt t\kern 0.5pt u}}\irdmu{u^2\,e^{\kern 0.5pt t\kern 0.5pt u}}-\(\irdmu{u\,e^{\kern 0.5pt t\kern 0.5pt u}}\)^2\ge0
\]
by the Cauchy-Schwarz inequality. With one more derivation, we find that
\begin{multline*}
\(\irdmu{e^{\kern 0.5pt t\kern 0.5pt u}}\)^3h'''(t)\\
=\irdmu{u^3\,e^{\kern 0.5pt t\kern 0.5pt u}}\(\irdmu{e^{\kern 0.5pt t\kern 0.5pt u}}\)^2-3\irdmu{u^2\,e^{\kern 0.5pt t\kern 0.5pt u}}\irdmu{u\,e^{\kern 0.5pt t\kern 0.5pt u}}\irdmu{e^{\kern 0.5pt t\kern 0.5pt u}}\\
+2\(\irdmu{u\,e^{\kern 0.5pt t\kern 0.5pt u}}\)^3\,.
\end{multline*}
Let us prove that $h'''(t)>0$ for $t\in(0,1)$ a.e. If $\irdmu{u\,e^{\kern 0.5pt t\kern 0.5pt u}}>0$, define
\[
d\nu_t:=\frac{e^{\kern 0.5pt t\kern 0.5pt u}}{\irdmu{e^{\kern 0.5pt t\kern 0.5pt u}}}\,d\mu\quad\mbox{and}\quad v:=\frac u{\int_{\R^2}u\;d\nu_t}\;,
\]
so that $\int_{\R^2}v\,d\nu_t=1$. Then $h'''(t)$ has the sign of \hbox{$\int_{\R^2}(v^3-3\,v^2+2)\,d\nu_t$}. Taking the constraint into account, a direct optimization shows that an optimal function $v$ takes two values, $1\pm a$ for some $a>0$ and moreover, $\nu_t(\{v=1+a\})=\nu_t(\{v=1-a\})$ because of the condition $\int_{\R^2}v\,d\nu_t=1$. In such a case, it is straightforward to check that \hbox{$\int_{\R^2}(v^3-3\,v^2+2)\,d\nu_t=0$}, thus proving that $h'''(t)\ge0$.

If $\irdmu{u\,e^{\kern 0.5pt t\kern 0.5pt u}}<0$, a similar computation with $v:=-u/\int_{\R^2}u\,d\nu_t$ shows that $-h'''(t)$ has the sign of $\int_{\R^2}(-v^3-3\,v^2+2)\,d\nu_t$ under the condition $\int_{\R^2}v\,d\nu_t=-1$, thus proving again that $h'''(t)\ge0$.

Since $h'(t)=\int_{\R^2}u\,d\nu_t$ is monotone increasing, $\irdmu{u\,e^{\kern 0.5pt t\kern 0.5pt u}}=0$ occurs for at most one $t\in[0,1]$ and we can conclude that $h'$ is convex. 

\medskip\noindent\emph{Claim 2: $h(1)\ge h'(1/2)$.} We know that $h(0)=h(1/2)=0$ and, by convexity of $h'$, $h''$ is monotone nondecreasing. Let $\kappa:=h''(1/2)$ and $\mathsf p:=h'(1/2)$. On the one hand, we have $h''(t)\le\kappa$ for any $t\in[0,1/2]$, which means that, after integrating from $t<1/2$ to $1/2$, we have
\[
\mathsf p-h'(t)\le\kappa\,(\tfrac 12-t)\quad\forall\;t\in[0,\tfrac 12]\;.
\]
Hence, one more integration from $0$ to $1/2$ gives 
\[
\frac{\mathsf p}2=\left[\mathsf p\,t-h(t)\right]_0^{1/2}\le\left[\kappa\, \tfrac t2\,(1-t)\right]_0^{1/2}=\frac\kappa8\;.
\]
On the other hand, we have $h''(t)\ge\kappa$ for any $t\in[1/2,1]$, which means that, after integrating from $1/2$ to $t>1/2$, we have
\[
h'(t)-\mathsf p\ge\kappa\,(t-\tfrac 12)\quad\forall\;t\in[\tfrac 12,1]\;.
\]
One more integration from $1/2$ to $1$ gives 
\[
h(1)-\frac{\mathsf p}2\ge\left[\kappa\, \tfrac t2\,(t-1)\right]_0^{1/2}=\frac\kappa8\;.
\]
Collecting the two estimates, we find that $h(1)\ge\mathsf p$, which proves the claim. 

\medskip Using $h(0)=\log\(\irdmu{}\)=0$ and $h(1/2)=\log\(\irdmu{e^{u/2}}\)=0$, we have found that $\mathsf p=h'(1/2)=\irdmu{u\,e^{u/2}}\le h(1)=\log\(\irdmu{e^u}\)$. This proves that
\[
\frac d{dt}\mathsf H_2[v(t,\cdot)]\ge\frac 1{16\,\pi}\ir{|\nabla u|^2}+\irdmu u-\log\(\irdmu{e^u}\)\ge 0\;,
\]
where the last inequality is nothing else than Onofri's inequality.\end{proof}
%-----------------------------------------------------------------------
\begin{corollary}\label{Cor:LogHLSDer} Consider a solution of~\eqref{Eqn:FDlog} a nonnegative initial datum $v_0$ such that $\ir{v_0}=1$. Then $\frac d{dt}\mathsf H_2[v(t,\cdot)]\ge 0$.\end{corollary}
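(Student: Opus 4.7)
The plan is to observe that the corollary is a direct consequence of Proposition~\ref{Prop:H2'} combined with Lemma~\ref{Lem:LogHLSDer}. Proposition~\ref{Prop:H2'} already supplies the explicit derivative formula
\[
\frac d{dt}\mathsf H_2[v(t,\cdot)]=\frac 1{16\,\pi}\ir{|\nabla u|^2}-\irdmu{\(e^\frac u2-1\) u}
\]
with $u:=2\,\log(v/\mu)$, so I would reduce the corollary to checking that the right-hand side is nonnegative for each $t\ge 0$; this is precisely the content of Lemma~\ref{Lem:LogHLSDer}, provided the normalization hypothesis $\irdmu{e^{u/2}}=1$ is in force.

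Second, I would verify this normalization along the flow. From $e^{u/2}=v/\mu$ one obtains
\[
\irdmu{e^{u/2}}=\ir{v(t,\cdot)}\,,
\]
which equals $\ir{v_0}=1$ thanks to the mass conservation property of~\eqref{Eqn:FDlog} recalled at the beginning of Section~\ref{Sec:FastLog} (and quoted from \cite{MR2282669,MR1291536,MR1357953}). Applying Lemma~\ref{Lem:LogHLSDer} to $u(t,\cdot)$ pointwise in $t$ then yields $\frac d{dt}\mathsf H_2[v(t,\cdot)]\ge 0$.

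The main obstacle I anticipate is a regularity/integrability matching issue: Lemma~\ref{Lem:LogHLSDer} is stated for $u\in\mathcal D(\R^d)$, whereas $u(t,\cdot)=2\,\log(v(t,\cdot)/\mu)$ is in general only measurable, with at most logarithmic growth at infinity. I would handle this by invoking the density remark immediately following the statement of Lemma~\ref{Lem:LogHLSDer}, which extends the inequality to the natural Orlicz space where $\ir{|\nabla u|^2}$ and $\irdmu{(e^{u/2}-1)\,u}$ are both finite. The required integrability is inherited from the standing assumptions on $v_0$ in Proposition~\ref{Prop:H2'} (in particular $v_0\log v_0\in\L^1(\R^2)$ and $v_0\log\mu\in\L^1(\R^2)$) and propagated in time via the stereographic transformation $w(t,y)=u(t,x)/(4\pi\mu(x))$ to a linear heat equation on $\mathbb S^2$ mentioned in Section~\ref{Sec:FastLog}, where smoothing and positivity are standard.
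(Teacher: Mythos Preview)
Your proposal is correct and follows precisely the route the paper takes: the corollary is stated without a separate proof because it is immediate from Proposition~\ref{Prop:H2'} together with Lemma~\ref{Lem:LogHLSDer}, and the normalization $\irdmu{e^{u/2}}=\ir{v(t,\cdot)}=1$ is exactly the mass conservation discussed at the beginning of Section~\ref{Sec:FastLog}. Your handling of the regularity issue via the density remark after Lemma~\ref{Lem:LogHLSDer} and the reduction to the heat flow on $\mathbb S^2$ also mirrors the paper's justification.
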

%-----------------------------------------------------------------------

%%%%%%%%%%%%%%%%%%%%%%%%%%%%%%%%%%%%%%%%%%%%%%%%%%%%%%%%%%%%%%%%%%%%%%%%
\subsection{Towards an improved Onofri inequality ?}\label{Sec:ImprovedOnofri}

At least at a formal level, we can observe that computations of Section~\ref{Sec:Fast2} for $d\ge3$ do not carry to the case $d=2$. With $u=2\,\log(v/\mu)$, Equation~\eqref{Eqn:FDlog} becomes
\[
\mu\,\frac{\partial u}{\partial t}=e^{-\frac u2}\,\Delta u\;,
\]
so that $\mathsf J:=2\ir{v\,\(\log\(v/\mu\)-1\)}$ takes the form
\[
\mathsf J=\irdmu{e^\frac u2\,(u-2)}
\]
and, when computing along the flow, we get
\[
\mathsf J'=-\frac 12\ir{|\nabla u|^2}\quad\mbox{and}\quad\mathsf J''=\int_{\R^2}(\Delta u)^2\,e^{-\frac u2}\;\frac{dx}\mu\;.
\]
If we consider the Cauchy-Schwarz inequality as in Section~\ref{Sec:Fast2}, namely
\begin{multline*}
4\,{\mathsf J'}^2=\(\ir{|\nabla u|^2}\)^2=\(\ir{\Delta u\,u}\)^2\\
=\(\ir{\Delta u\,e^{-u/4}\,\frac1{\sqrt \mu}\cdot u\,e^{u/4}\,\sqrt \mu}\)^2\le\mathsf J''\irdmu{u^2\,e^\frac u2}\,,
\end{multline*}
it clearly turns out that the above r.h.s.~cannot be controlled by $\mathsf J\,\mathsf J''$ using a simple Cauchy-Schwarz inequality. The scheme of the proof of Theorem~\ref{Theorem:ExplicitGap} for improving Sobolev's inequality cannot be directly applied to Onofri's inequality.

%%%%%%%%%%%%%%%%%%%%%%%%%%%%%%%%%%%%%%%%%%%%%%%%%%%%%%%%%%%%%%%%%%%%%%%%
%%%%%%%%%%%%%%%%%%%%%%%%%%%%%%%%%%%%%%%%%%%%%%%%%%%%%%%%%%%%%%%%%%%%%%%%
\section{Concluding remarks}\label{Sec:Conclusion}

In \cite{CCL}, E.~Carlen, J.A.~Carrillo and M.~Loss noticed that Hardy-Littlewood-Sobolev inequalities and Gagliardo-Nirenberg inequalities can be related through the fast diffusion equation~\eqref{Eqn:FD} with exponent $m=d/(d+2)$, when $d\ge 3$. The key computation goes as follows:
\begin{multline*}
\frac 12\,\frac d{dt}\mathsf H_d[v(t,\cdot)]=\frac 12\,\frac d{dt}\left[\ird{v\,(-\Delta)^{-1}v}-\mathsf S_d\,\nrm v{\frac{2\,d}{d+2}}^2\right]\\
=\tfrac{d\,(d-2)}{(d-1)^2}\,\mathsf S_d\,\nrm u{q+1}^{4/(d-1)}\,\nrm{\nabla u}2^2-\nrm u{2q}^{2q}
\end{multline*}
with $u=v^{(d-1)/(d+2)}$ and $q=(d+1)/(d-1)$. An explicit computation shows that
\[
\tfrac{d\,(d-2)}{(d-1)^2}\,\mathsf S_d=(\C{q,\kern 1pt d})^{2q}\,,
\]
thus proving that $\frac d{dt}\mathsf H_d[v(t,\cdot)]$ has a sign because of the Gagliardo-Nirenberg inequalities~\eqref{Ineq:GN}. Using the fact that the asymptotic behaviour of the solutions of the fast diffusion equation~\eqref{Eqn:FD} with $m=d/(d+2)$ is governed by the Barenblatt self-similar solutions (see \cite{MR1940370}), an integral remainder term (obtained by integrating along the flow of~\eqref{Eqn:FD}) has been established, which improves on Hardy-Littlewood-Sobolev inequalities~\eqref{Ineq:HLS}. 

A similar result holds for the logarithmic Hardy-Littlewood-Sobolev inequality in dimension $d=2$. Computing along the flow of~\eqref{Eqn:FD} with exponent $m=1/2$, it turns out that
\begin{multline*}
\frac{\nr v1}8\,\frac d{dt}\left[\frac{4\,\pi}{\nr v1}\ir{v\,(-\Delta)^{-1}v}-\ir{v\,\log v}\right]\\
=\nr u4^4\,\nr{\nabla u}2^2-\pi\,\nr v6^6\;,
\end{multline*}
which is again one of the Gagliardo-Nirenberg inequalities~\eqref{Ineq:GN}, the one corresponding to $d=2$ and $q=3$, which is moreover such that $\pi\,(\C{3,\kern 1pt 2})^6=1$. 

The results of \cite{CCL} relate Hardy-Littlewood-Sobolev inequalities (the logarithmic Hardy-Littlewood-Sobolev inequality if $d=2$) with non-critical Gagliardo-Nirenberg inequalities through an evolution equation, although the Barenblatt self-similar solutions associated to the diffusion equation are not optimal for such Gagliardo-Nirenberg inequalities. In such a setting as well as in the setting considered in Sections~\ref{Sec:Intro}--\ref {Sec:Two}, a nonlinear flow allows to improve on well known inequalities, with rather straightforward computations. These two examples suggest that much more can be done using flows of nonlinear diffusion equations.

%%%%%%%%%%%%%%%%%%%%%%%%%%%%%%%%%%%%%%%%%%%%%%%%%%%%%%%%%%%%%%%%%%%%%%%%
%%%%%%%%%%%%%%%%%%%%%%%%%%%%%%%%%%%%%%%%%%%%%%%%%%%%%%%%%%%%%%%%%%%%%%%%
\par\medskip\centerline{\rule{2cm}{0.2mm}}\medskip\begin{spacing}{0.5} \noindent{\small{\bf Acknowlegments.} The author thanks J.A.~Carrillo and M.J.~Esteban for useful suggestions. This work has been partially supported by the projects CBDif and EVOL of the French National Research Agency (ANR).
\par\medskip\noindent\copyright\,2010 by the author. This paper may be reproduced, in its entirety, for non-commercial purposes.}\end{spacing}

%%%%%%%%%%%%%%%%%%%%%%%%%%%%%%%%%%%%%%%%%%%%%%%%%%%%%%%%%%%%%%%%%%%%%%%%
%%%%%%%%%%%%%%%%%%%%%%%%%%%%%%%%%%%%%%%%%%%%%%%%%%%%%%%%%%%%%%%%%%%%%%%%

\end{document}